\newtheorem{thm}{Theorem}
\newtheorem{prop}{Proposition}
\newtheorem{cor}{Corollary}
\newtheorem{exm}{Example}
\newtheorem{rmk}{Remark}
\newtheorem{definition}{Definition}
\begin{document}

\begin{center}
 {\large An ${\mathfrak S}_3$-cover of $K_4$ and integral polyhedral graphs}

 \bigskip
 Taizo Sadahiro
\end{center}

\begin{abstract}
We first show that the star graph
$
 X_n=\mathrm{Cay}(\mathfrak S_{n+1},S),
$
where $S=\{(i, n+1)\,|\, 1\leq i\leq n\}$,
is an ${\mathfrak S}_n$-Galois cover of the complete graph $K_{n+1}$.
We then study the case $n=3$ in detail, 
where the intermediate covers are the cube graph and the truncated tetrahedron graph.
Using the factorization of the Ihara zeta function along coverings,
we derive a multiset identity relating the adjacency spectra of these graphs,
yielding a unified explanation for the integrality of their spectra.
This construction yields an explicit integral ${\mathfrak S}_3$-cover of $K_5$.
\end{abstract}


\noindent
{\bf Keywords:} $S_n$-Galois cover, star graph, Ihara zeta function, integral spectra


%
%
%
%

\section{Introduction}

It is well known that the tetrahedron $K_4$,
the cube $Q$, and the truncated tetrahedron $T$ have
only integral eigenvalues when viewed as graphs.
The following fact is less well known:
The adjacency operator of the honeycomb lattice 
under the periodic boundary
condition shown in Figure $\ref{fig:boundarycondition}$
has the integral eigenvalue multiset expressed by the Fourier formula,
\[
 {\mathcal S}=\left\{\pm\left|1+\exp\left({\frac{2\pi{k}{i}}{6}}\right) +
 \exp\left({\frac{2\pi{(-k+3l)}{i}}{6}}\right) \right|~~\middle|\,
 k\in {\mathbb Z}/6{\mathbb Z},~ l\in {\mathbb Z}/2{\mathbb Z}\right\}.
\]
We give a unifying interpretation of these facts
using the graph Galois theory.
In particular, the Galois theory shows the following multisets relation,
\[
 {\mathcal S} \cup {\rm Spec}(K_4) \cup {\rm Spec}(K_4)
 =
 {\rm Spec}(Q) \cup {\rm Spec}(T) \cup {\rm Spec}(T),
\]
where ${\rm Spec}(\Gamma)$ is the multiset of the eigenvalues
of the adjacency matrix of a finite graph $\Gamma$.
The integrality of ${\mathcal S}$ is derived from the fact that
it is identical to the spectrum of a {\em star graph}
whose integrality was conjectured by Abdollahi and Vatandoost
\cite{abdollahi2009cayley} and
proved by Krakovski and Mohar \cite{krakovski2012spectrum}.

Our main aim in this paper is to  present a classical but underappreciated example of 
a non-abelian Galois cover of $K_4$ with the Galois group ${\mathfrak S}_3$, 
whose intermediate covers are the cube and the truncated tetrahedron. 
Although the graph is small and explicit, 
this cover seems to be not known in the literature. 
We give a Cayley graph realization and compute the relation
between the Ihara zeta functions of the intermediate covers, 
clarifying its connection to examples mentioned by Stark and Terras in \cite{stark2000zeta}. 
We further show some geometric properties,
how this cover embeds in the torus or the honeycomb lattice,
which enbales the Fourier analysis.
As a biproduct, we also show an explicit integral ${\mathfrak S}_3$-cover of $K_5$.

\begin{center}
 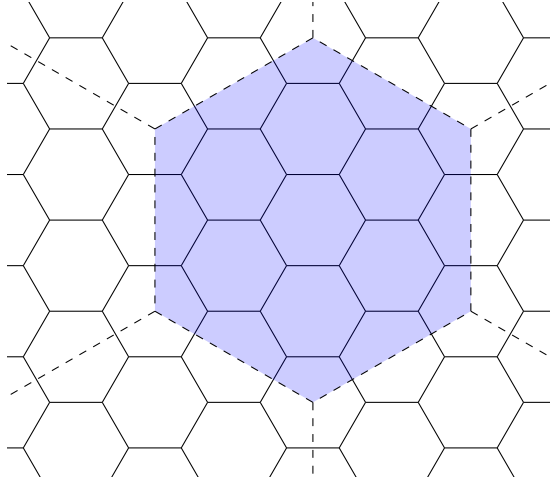
\begin{figure}[H]
  \begin{center}
   \begin{tikzpicture}[scale=0.7]
    \clip (-5.3,-4) -- (5,-4) -- (5,5) -- (-5.3,5) -- cycle;
    \draw[dashed,fill=blue, fill opacity=0.2] (-2.5,{-sqrt(3)/2}) -- ++(-3,{-sqrt(3)});
    \draw[dashed,fill=blue, fill opacity=0.2] (-2.5,{-sqrt(3)/2})  ++(3,{-sqrt(3)}) -- ++(0,{-2*sqrt(3)});
    \draw[dashed,fill=blue, fill opacity=0.2] (-2.5,{-sqrt(3)/2})  ++(3,{-sqrt(3)})++(0,{4*sqrt(3)}) -- ++(0,{2*sqrt(3)});
    \draw[dashed,fill=blue, fill opacity=0.2] (-2.5,{-sqrt(3)/2})  ++(3,{-sqrt(3)}) ++(3,{sqrt(3)}) -- ++(3,{-sqrt(3)});
    \draw[dashed,fill=blue, fill opacity=0.2] (-2.5,{-sqrt(3)/2})  ++(6,{2*sqrt(3)}) -- ++(3,{sqrt(3)});
    \draw[dashed,fill=blue, fill opacity=0.2] (-2.5,{-sqrt(3)/2})  ++(0,{2*sqrt(3)}) -- ++(-3,{sqrt(3)});
    \foreach \y in {-4,-3,...,4}{
    \foreach \x in {-4,-3,...,4}{
    \draw ({(\x+\y)*3/2},{(\x-\y)*sqrt(3)/2}) -- ++(1,0);
    \draw ({(\x+\y)*3/2},{(\x-\y)*sqrt(3)/2}) -- ++({-1/2},{sqrt(3)/2});
    \draw ({(\x+\y)*3/2},{(\x-\y)*sqrt(3)/2}) -- ++({-1/2},{-sqrt(3)/2});
    }}
    \draw[dashed,fill=blue, fill opacity=0.2] (-2.5,{-sqrt(3)/2}) -- ++(3,{-sqrt(3)})
    -- ++(3,{sqrt(3)}) -- ++(0,{2*sqrt(3)}) -- ++(-3,{sqrt(3)}) -- ++(-3,{-sqrt(3)}) -- cycle;
   \end{tikzpicture}
  \end{center}
  \caption{A periodic boundary condition with hexagonal fundamental domain
  which make the adjacency operator on the honeycomb lattice to have only integral eigenvalues.}
  \label{fig:boundarycondition}
 \end{figure}
\end{center}

\section{Construction of ${\mathfrak S}_n$-cover of $K_{n+1}$}

\begin{definition}[Graph Cover]
Let \(X = (V,E)\) and \(\tilde{X} = (\tilde{V}, \tilde{E})\) be finite simple graphs.  
A graph \(\tilde{X}\) is called a \emph{cover} of \(X\) if there exists a surjective graph morphism
\[
p : \tilde{X} \to X
\]
such that for every vertex \(\tilde{v} \in \tilde{V}\), the map \(p\) induces a bijection between the set of edges incident to \(\tilde{v}\) and the set of edges incident to \(p(\tilde{v})\).  
In particular, the local neighborhood structure is preserved.
\end{definition}

\begin{definition}[Galois Cover]
A cover \(p : \tilde{X} \to X\) is called \emph{Galois} (or \emph{regular}) if there exists a group \(G\) acting freely and transitively on each fiber \(p^{-1}(w)\) for every vertex \(w \in V(X)\),  
and such that the quotient graph \(\tilde{X} / G\) is isomorphic to \(X\).  

The group \(G\) is called the \emph{Galois group} of the cover,
and $\tilde{X}$ is called a \emph{$G$-cover} of $X$.
\end{definition}

The ${\mathfrak S}_n$-cover of $K_{n+1}$ we construct here is known as
the star graph, which is the Cayley graph of ${\mathfrak S}_{n+1}$
generated by the {\em star transpositions}.
The star graph is studied in 
\cite{ehrlich1973loopless} in the context of an algorithm generating
permutations.
Its spectral properties has been closely studied.
\begin{thm}
 \label{thm:main}
 Let $\tau_{i}=(i, n+1)\in {\mathfrak S}_{n+1}$ be the transposition
 for $i=1,2,\ldots,n$, and
 let $X_n={\rm Cay}({\mathfrak S}_{n+1}, \{\tau_1, \tau_2, \ldots, \tau_n\})$.
 Then, $X_n$ is an ${\mathfrak S}_n$-cover of the complete graph $K_{n+1}$.
\end{thm}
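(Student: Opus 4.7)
My approach is to exhibit the natural projection together with an explicit action of $\mathfrak{S}_n$ on $X_n$ by graph automorphisms. I would define
\[
p : V(X_n) \longrightarrow V(K_{n+1}) = \{1, 2, \ldots, n+1\}, \qquad p(\sigma) = \sigma(n+1),
\]
and take the subgroup $H = \mathrm{Stab}_{\mathfrak{S}_{n+1}}(n+1) \cong \mathfrak{S}_n$ as the candidate Galois group, acting on $V(X_n)$ by right multiplication $\sigma \mapsto \sigma h$.

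The first step is to show that $p$ is a cover in the sense of the definition. The $n$ neighbors of $\sigma$ in $X_n$ are $\sigma\tau_1, \ldots, \sigma\tau_n$, and
\[
p(\sigma\tau_i) = \sigma(\tau_i(n+1)) = \sigma(i).
\]
Since $\sigma$ is a bijection, $\sigma(1), \ldots, \sigma(n)$ are pairwise distinct and all different from $\sigma(n+1) = p(\sigma)$, so they are precisely the $n$ neighbors of $p(\sigma)$ in $K_{n+1}$; this gives the required local bijection on edges.

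The second step is to check that right multiplication by $H$ is a group of graph automorphisms that acts freely and transitively on each fiber. The essential calculation is the conjugation identity
\[
h^{-1}\tau_i h = (h^{-1}(i),\, h^{-1}(n+1)) = (h^{-1}(i),\, n+1) = \tau_{h^{-1}(i)},
\]
which uses only that $h \in H$ fixes $n+1$. It shows $\sigma\tau_i \cdot h = \sigma h \cdot \tau_{h^{-1}(i)}$, so right multiplication by $h$ permutes edges of $X_n$. Since $p(\sigma h) = \sigma(h(n+1)) = p(\sigma)$, the map $p$ is $H$-invariant, and standard coset theory identifies the fibers of $p$ with the left cosets $\sigma H$, on which $H$ acts by right multiplication freely and transitively. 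Combining this with the covering property from the previous step identifies $K_{n+1}$ with the quotient $X_n/H$, so the cover is Galois with group $\mathfrak{S}_n$.

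I expect the whole argument to be essentially bookkeeping once the correct projection and action are chosen, with the one genuinely nontrivial point being the conjugation identity above: it is what forces the star-transposition set $\{\tau_1, \ldots, \tau_n\}$ to be stable under $H$-conjugation, and hence what allows the stabilizer subgroup to act on the Cayley graph in the first place.
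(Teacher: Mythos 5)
Your proposal is correct and follows essentially the same route as the paper: the same projection $\sigma \mapsto \sigma(n+1)$, the same stabilizer subgroup acting by right multiplication, and the same conjugation identity $\sigma\tau_i h = \sigma h\,\tau_{h^{-1}(i)}$ establishing that the action is by graph automorphisms. If anything, your verification that $\sigma(1),\ldots,\sigma(n)$ are distinct and different from $p(\sigma)$, and your identification of the fibers with left cosets, spell out details the paper leaves implicit.
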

\begin{proof}
Let the vertex set of $K_{n+1}$ be $V(K_{n+1})=\{1,2,\ldots,n+1\}$.
Then there exists exactly one edge from $i\in V(K_{n+1})$ to $j\in V(K_{n+1})$
if and only if $i\neq j$.
First we define the map $p_V: V(X_n)={\mathfrak S}_{n+1}\to V(K_{n+1})$ by
\[
 p_V(\xi) = \xi(n+1),
\]
which is clearly surjective.
$X_n$ has an edge $e_i$ from $\xi$ to $\eta$ if and only if there exists
a $\tau_i$ such that
\[
 \eta = \xi\tau_i.
\]
Then, $p_V(\eta)=\xi\tau_i(n+1)=\xi(i)$.
Thus $p_V$ induces a bijection from $E_\xi(X_n)$ to $E_{p_V(\sigma)}(K_{n+1})$.
These local bijections form a map $p_E: E(X_n)\to E(K_{n+1})$ and
$p = (p_V, p_E)$ is a covering map.

Let $G_n$ be a subgroup of ${\mathfrak S}_{n+1}$ defined by
\[
 G_n=\left\{\sigma\in {\mathfrak S}_{n+1}\,|\, \sigma(n+1)=n+1\right\}.
\]
Then $G_n$ is isomorphic to ${\mathfrak S}_n$ and
acts naturally on $V(X_n)$ from the {\em right}. 
First we show that
this action of $\sigma\in G_n$ on $V(X_n)$ induces
a graph automorphism of $X_n$.
Let $\xi$ and $\eta$ be two adjacent vertices in $X_n$.
That is, there is a transposition $\tau_i$, such that
\[
 \xi\tau_i = \eta.
\]
Then for every $\sigma\in G_n$, we have
\[
 \xi\sigma\tau_{\sigma^{-1}(i)} = \xi\tau_{i}\sigma = \eta\sigma,
\]
which implies $\xi\sigma$ is adjacent to $\eta\sigma$.
Thus $\sigma$'s action on $V(X_n)$ induces
a graph automorphism of $X_n$.

It is clear that
\[
 p_V(\xi\sigma) = p_V(\xi),
\]
for all $\xi\in {\mathfrak S}_{n+1}$ and $\sigma\in G$,
since $\xi\sigma(n+1)=\xi(n+1)$. Thus $G$ acts on 
each fiber $p^{-1}(i)$ freely and transitively for $i=1,2,\ldots,n+1$.

\end{proof}

\begin{rmk}
 By Theorem ${\ref{thm:main}}$, we can construct
 ${\mathfrak S}_n$-cover of any connected simple graphs
 with $n+1$ vertices, by deleting edges from $K_{n+1}$
 and corresponding edges in the cover $X_n$.
 This includes the example shown by Stark and Terras \cite{stark2000zeta} of
 ${\mathfrak S}_3$-cover of a graph $K_4-\{e\}$,
 a graph obtained from $K_4$ by deleting an edge $e$.
\end{rmk}

The integrarity of the star graph was first proved by
Krakovski and Mohar \cite{krakovski2012spectrum}.
\begin{thm}
[Krakovski and Mohar \cite{krakovski2012spectrum}]
\label{thm:kramoha}
When $n\geq 3$, $k\in {\rm Spec}(X_n)$ for $k=0,1,2,\ldots,n$.
\end{thm}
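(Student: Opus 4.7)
The plan is to exploit the fact that the sum of the generating transpositions coincides with a Jucys--Murphy element of the symmetric group algebra, whose action on each irreducible representation of $\mathfrak{S}_{n+1}$ is diagonal in Young's seminormal basis with integer eigenvalues equal to contents of Young-diagram boxes. This converts the spectral problem into the combinatorial problem of finding partitions with removable corners of specified contents.

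First I would note that, under the identification of $\ell^2(V(X_n))$ with the regular representation of $\mathfrak{S}_{n+1}$, the adjacency operator of $X_n$ is the element $T=\tau_1+\tau_2+\cdots+\tau_n=\sum_{i=1}^{n}(i,n+1)$ acting by right multiplication; that is, $T$ is exactly the Jucys--Murphy element $J_{n+1}\in \mathbb{C}[\mathfrak{S}_{n+1}]$. Since the regular representation decomposes as $\bigoplus_{\lambda\vdash n+1}V^\lambda\otimes(V^\lambda)^*$, the spectrum of $A(X_n)$ equals, with multiplicities, the disjoint union over $\lambda\vdash n+1$ of $\operatorname{Spec}(\rho_\lambda(J_{n+1}))$, each value from $V^\lambda$ appearing $\dim V^\lambda$ times.

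Next I would invoke the classical result (from Young's seminormal form, or the Okounkov--Vershik approach to the representation theory of $\mathfrak{S}_{n+1}$) that $J_{n+1}$ acts diagonally on Young's seminormal basis of $V^\lambda$, with eigenvalues exactly the contents $c(i,j)=j-i$ of the removable corner boxes of $\lambda$. So it suffices to exhibit, for each $k\in\{0,1,\ldots,n\}$, a partition $\lambda_k\vdash n+1$ whose Young diagram has a removable corner of content $k$. For $k\in\{1,2,\ldots,n\}$ the hook $\lambda_k=(k+1,1^{n-k})$ does the job, since the box at $(1,k+1)$ is a removable corner of content $k$; for $k=0$ the partition $\lambda_0=(n-1,2)$ works, since the box at $(2,2)$ is a removable corner of content $0$, and this is precisely the step where the hypothesis $n\geq 3$ is used (so that $(n-1,2)$ is a legitimate partition of $n+1$).

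The main obstacle is the invocation of the Jucys--Murphy identification, which is not elementary. One option is to cite it as a black box from the representation theory of the symmetric group. A self-contained alternative would require developing at least a fragment of the Okounkov--Vershik machinery, or explicitly constructing Young's seminormal idempotents and verifying the content formula on them, which is considerably more involved than the combinatorial step of exhibiting the partitions $\lambda_k$. All remaining work---checking that the shapes $\lambda_k$ have the claimed removable corners---is routine.
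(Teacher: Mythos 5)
Your argument is correct, but it is worth noting that the paper does not prove this theorem at all: it is quoted as a black box from Krakovski and Mohar, whose original proof is an elementary inductive construction of explicit integer eigenvectors, with no representation theory. What you have written is instead precisely the Chapuy--F\'{e}ray route, which the paper records immediately afterwards as Theorem \ref{thm:chapuyferay}: the adjacency operator of $X_n$ is right multiplication by the Jucys--Murphy element $J_{n+1}=\sum_{i=1}^{n}(i,n+1)$, the right regular representation contains each $V^\lambda$ with multiplicity $f^\lambda$, and $J_{n+1}$ acts on the Gelfand--Tsetlin (seminormal) basis vector indexed by $T\in\mathrm{SYT}(\lambda)$ by the content of the box of $T$ containing $n+1$, which is always a removable corner of $\lambda$. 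Your reduction to exhibiting, for each $k\in\{0,1,\dots,n\}$, a partition of $n+1$ with a removable corner of content $k$ is then correctly discharged by the hooks $(k+1,1^{n-k})$ for $k\geq 1$ and by $(n-1,2)$ for $k=0$, and you correctly locate the role of the hypothesis $n\geq 3$ (for $n=2$ no partition of $3$ has a removable corner of content $0$, and indeed $0\notin\mathrm{Spec}(X_2)$). The trade-off between the two approaches is clear: Krakovski--Mohar is self-contained and elementary, while your argument imports the Jucys--Murphy spectral theory but in exchange yields the full multiplicity formula of Theorem \ref{thm:chapuyferay} rather than mere membership of $k$ in the spectrum. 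One small point of bookkeeping: the eigenvalue attached to $T$ is the content of the box containing $n+1$ (not $n$), consistent with your removable-corner description; the expression $c_T(n)$ in the paper's statement of Theorem \ref{thm:chapuyferay} appears to be an off-by-one slip, as the table in the subsequent remark is computed with $c_T(4)$ for $n=3$.
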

Moreover, Chapuy and F\'{e}ray \cite{ChapuyFeray} gave an explicit formula for the multiplicity of each eigenvalue.  
\begin{thm}
[Chapuy and F\'{e}ray \cite{ChapuyFeray}]
\label{thm:chapuyferay}
Let ${\mathcal P}(n+1)$ denote the set of partitions of $n+1$, 
let ${\rm SYT}(\lambda)$ be the set of standard Young tableaux  of 
shape $\lambda\in{\mathcal P}(n+1)$,
and let $f^\lambda=|SYT(\lambda)|$.  
For a tableau $T \in \mathrm{SYT}(\lambda)$, 
let $c_T(m)$ be the \emph{content} of the box containing $m$, 
i.e., the column index minus the row index of that box.  
Then the multiplicity of the eigenvalue $k$ of $X_n$ is given by
\[
\mathrm{mult}_{X_n}(k) = \sum_{\lambda \in {\mathcal P}(n+1)} f^\lambda \, I_\lambda(k),
\]
where
\[
I_\lambda(k) = \#\{\, T \in \mathrm{SYT}(\lambda) \,|\, c_T(n+1) = k \,\}.
\]
\end{thm}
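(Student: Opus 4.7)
My plan is to realize the adjacency operator of $X_n$ as right-multiplication by a distinguished element of the group algebra $\mathbb{C}[\mathfrak{S}_{n+1}]$, use the Peter--Weyl / Wedderburn decomposition of the regular representation, and then invoke the Jucys--Murphy diagonalization to read off the eigenvalues on each irreducible block.

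First I would observe that, because $X_n$ is a Cayley graph on the symmetric generating set $\{\tau_1,\dots,\tau_n\}$, its adjacency operator on $\ell^2(V(X_n))\cong\mathbb{C}[\mathfrak{S}_{n+1}]$ is right multiplication by
\[
J \;=\; \sum_{i=1}^{n}\tau_i \;=\; \sum_{i=1}^{n}(i,\,n+1)\;\in\;\mathbb{C}[\mathfrak{S}_{n+1}],
\]
which is precisely the $(n{+}1)$-st Jucys--Murphy element. Note that $J$ is \emph{not} a conjugacy-class sum, so the elementary character formula that diagonalizes normal Cayley graphs is unavailable; one really needs the finer Gelfand--Tsetlin structure.

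Next, by Wedderburn, $\mathbb{C}[\mathfrak{S}_{n+1}]\cong\bigoplus_{\lambda\vdash n+1}\mathrm{End}(V_\lambda)$, so the left regular representation decomposes as $\bigoplus_\lambda V_\lambda^{\oplus f^\lambda}$. Consequently the spectrum of $J$ on $\mathbb{C}[\mathfrak{S}_{n+1}]$ is the multiset union, over $\lambda\vdash n+1$, of the spectrum of $J$ acting on $V_\lambda$, each eigenvalue repeated $f^\lambda$ times. Thus the task reduces to determining $\mathrm{Spec}(J\mid V_\lambda)$ with multiplicities for every partition $\lambda$.

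The key classical input is the Jucys--Murphy theorem: in the Young (seminormal) basis of $V_\lambda$, canonically indexed by $T\in\mathrm{SYT}(\lambda)$, the element $J$ acts diagonally, and its eigenvalue on the basis vector indexed by $T$ equals the content of the box of $T$ filled by the largest entry (the corner added in passing from the sub-tableau on the first $n$ letters to $T$). Collecting tableaux with prescribed content across all $\lambda$ and weighting by $f^\lambda$ yields the stated formula for $\mathrm{mult}_{X_n}(k)$.

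The one delicate ingredient, and where essentially all of the depth lies, is the Jucys--Murphy diagonalization, which rests on the analysis of the Gelfand--Tsetlin subalgebra generated by $J_2,\dots,J_{n+1}$ and the multiplicity-free branching $\mathfrak{S}_k\hookrightarrow\mathfrak{S}_{k+1}$; once this is quoted as a black box, the remainder is bookkeeping in the Peter--Weyl decomposition. A minor subtlety to address in writing is a clean statement of the content convention, together with the identification of ``the largest entry'' in $T$ with the entry indexed in the theorem's statement.
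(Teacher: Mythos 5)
Your proposal is correct and follows exactly the route the paper indicates for this cited result: identify the adjacency operator with right multiplication by the Jucys--Murphy element $J_{n+1}=\sum_{i=1}^{n}(i,n+1)$, decompose the regular representation as $\bigoplus_{\lambda\vdash n+1}V_\lambda^{\oplus f^\lambda}$, and read off the diagonal action in the Gelfand--Tsetlin basis. The content-convention subtlety you flag is real: the eigenvalue on the basis vector $v_T$ is the content of the box containing the largest entry $n+1$ (as the paper's own table for $n=3$ confirms), so the statement's $c_T(n)$ must be read as $c_T(n+1)$.
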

This formula follows from the decomposition of the permutation representation of 
${\mathfrak S}_{n+1}$ and the Jucys–Murphy element diagonalization.

\bigskip
In the rest of this section, we consider the case of $n=3$,
 when Theorem $\ref{thm:main}$ yields
 an ${\mathfrak S}_3$-cover of $K_4$ depicted in the left side of
 Figure $\ref{fig:truncatedtetrahedron}$.
 A figure of this graph appears in The art of Computer Programming vol.$4$ 
 \cite{knuth2011taocp4a} Fig.$44$.
Before stating our main results, we introduce some additional terminology on graph covers.
\begin{definition}[Intermediate cover]
Let $\widetilde{X}$ be a cover of a graph $X$ whose covering map is
\(p:\widetilde{X}\to X\).
An \emph{intermediate cover} is a graph $Y$
together with a covering map \(q:\widetilde{X}\to Y\) and a covering map \(r:Y\to X\) such that
\[
p = r \circ q.
\]
In other words, \(Y\) is a graph through which \(p\) factors as a composition of two covers.
\end{definition}

For the proof of the following theorem, see \cite{terras2010zeta}.
\begin{thm}[Galois correspondence for graph covers \cite{terras2010zeta}]
Let \(p:\tilde{X}\to X\) be a Galois cover with Galois group \(G\).
There is a one-to-one inclusion-reversing correspondence between
\[
\{\mbox{subgroups } H\le G\}
\quad\mbox{and}\quad
\{\mbox{intermediate covers } \tilde{X}\to Y\to X\},
\]
given by \(H\mapsto Y=\tilde{X}/H\).
Moreover, for any \(H\le G\):
\begin{enumerate}
  \item The quotient map \(\tilde{X}\to\tilde{X}/H\) is a covering map.
  \item \(\tilde{X}/H\to X\) is a Galois  cover if and only if \(H\triangleleft G\);
  in that case its Galois group is \(G/H\).
\end{enumerate}
\end{thm}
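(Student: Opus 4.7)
The plan is to construct the correspondence in both directions and then verify the structural properties one by one. First I would fix notation: for a subgroup $H\le G$, define $\tilde X/H$ to have vertex set $V(\tilde X)/H$ (the $H$-orbits) and an edge between orbits $[\tilde u]$ and $[\tilde v]$ whenever some representatives are adjacent in $\tilde X$. Because $G$ (and hence $H$) acts freely on $V(\tilde X)$ and the original action of $G$ consists of graph automorphisms, the $H$-action is also free on vertices and edges, so the quotient is a well-defined simple graph once we check that edges are carried to edges consistently.

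Next I would verify that $q:\tilde X\to\tilde X/H$ is a covering map, which is the technical heart of step (1). The key local statement is: for $\tilde v\in V(\tilde X)$, the restriction of $q$ to edges incident to $\tilde v$ is a bijection onto the edges incident to $[\tilde v]$. Injectivity uses that the $H$-action is free (if two neighbors of $\tilde v$ lay in the same $H$-orbit, an element of $H$ would fix $\tilde v$ but move a neighbor, contradicting freeness combined with the covering property of $p$). Surjectivity is immediate from the definition of edges in $\tilde X/H$. A parallel argument, together with the fact that $p$ itself is a covering map and $H$ stabilizes fibers of $p$, shows that $r:\tilde X/H\to X$ is also a covering map, and clearly $p=r\circ q$.

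For the correspondence, I would define the inverse map by sending an intermediate cover $\tilde X\xrightarrow{q}Y\xrightarrow{r}X$ to the subgroup $H=\{\,g\in G\mid q\circ g=q\,\}$, i.e.\ the deck transformations of $\tilde X\to X$ that descend to the identity on $Y$. Checking $H\le G$ is immediate. To see that $\tilde X/H\cong Y$ over $X$, I would use that on any fiber $p^{-1}(x)$ the group $G$ acts simply transitively, so the fibers of $q$ above the fiber of $r$ over $x$ partition $p^{-1}(x)$ into $H$-cosets; this produces a natural isomorphism $\tilde X/H\to Y$ commuting with the maps to $X$. Inclusion reversal is clear: larger $H$ gives coarser orbits, hence a smaller (more quotiented) intermediate cover.

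Finally, for the Galois part, if $H\triangleleft G$ then the quotient group $G/H$ acts on $\tilde X/H$ by $[gH]\cdot[\tilde v]=[g\tilde v]$; this is well-defined precisely because $H$ is normal, and it is free and transitive on each fiber of $r$ because the original $G$-action had this property on fibers of $p$. Conversely, if $\tilde X/H\to X$ is Galois with some Galois group $G'$, lifting $G'$ via $q$ shows that $G'$ must equal $G/H$ as a quotient of $G$, forcing $H$ to be normal. The main obstacle I anticipate is the careful verification of step (1) in the presence of multi-edges and the free action assumption; one must use the covering property of $p$ together with freeness to rule out collisions among the $H$-images of neighbors of $\tilde v$, since it is exactly this local bijection that distinguishes a covering from an arbitrary quotient by a group of automorphisms.
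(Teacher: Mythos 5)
The paper does not prove this theorem: it is stated with an explicit pointer to Terras's book, so there is no in-paper argument to compare against. Your outline is the standard proof of the graph-cover Galois correspondence and is essentially correct, but one step should be tightened and one hypothesis made explicit. For the local edge bijection of $q:\tilde{X}\to\tilde{X}/H$ at a vertex $\tilde{v}$, your injectivity argument asserts that a collision among neighbors would yield an element of $H$ fixing $\tilde{v}$; that does not follow, since an $h\in H$ with $h\tilde{u}=\tilde{u}'$ for two neighbors $\tilde{u},\tilde{u}'$ of $\tilde{v}$ need not fix $\tilde{v}$. The correct route is through $p$: elements of $H\le G$ preserve the fibers of $p$, so $p(\tilde{u})=p(\tilde{u}')$, while $p$ is a cover of simple graphs and hence injective on the neighbors of $\tilde{v}$, forcing $\tilde{u}=\tilde{u}'$; likewise no neighbor of $\tilde{v}$ can lie in the $H$-orbit of $\tilde{v}$, as that would force a loop at $p(\tilde{v})$ in $X$. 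Separately, your inverse map (sending an intermediate cover $Y$ to $H=\{g\in G\mid q\circ g=q\}$ and showing the fibers of $q$ inside $p^{-1}(x)$ are the $H$-cosets) silently requires connectivity of $\tilde{X}$: it is path lifting from a fixed basepoint that guarantees the fiber of $q$ through the basepoint corresponds to a subgroup of $G$ rather than an arbitrary subset, and that the remaining fibers are its cosets. With those two points repaired, the argument, including the normality criterion via lifting deck transformations of $Y\to X$ to elements of the normalizer of $H$ in $G$, is the same as the cited one.
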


\begin{thm}
 \label{thm:GaloisX3}
 The star graph $X_3$ is a ${\mathfrak S}_3$-cover of
 $K_4$, whose intermediate covers are isomorphic to
 the cube graph or the truncated tetrahedron graph.
\end{thm}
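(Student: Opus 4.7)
The first clause is the $n=3$ case of Theorem~\ref{thm:main}, so my plan concentrates on the second. I would invoke the Galois correspondence to reduce the question to classifying subgroups of $\mathfrak{S}_3$: besides $\{e\}$ and $\mathfrak{S}_3$ themselves (which recover $X_3$ and $K_4$), there are the normal subgroup $A_3$ of index $2$ and three mutually conjugate subgroups of order $2$. Because conjugate subgroups produce isomorphic intermediate covers, this leaves precisely two graphs to identify: the $8$-vertex quotient $X_3/A_3$ and, picking one representative, the $12$-vertex quotient $X_3/\langle(12)\rangle$.

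For $X_3/A_3$ my plan is to parametrize the cosets explicitly. Since $A_3\subset G_3$ fixes the symbol $4$ pointwise and consists of even permutations, both $\xi(4)$ and $\mathrm{sgn}(\xi)$ are constant on each right coset $\xi A_3$, giving a bijection $\xi A_3\mapsto(\xi(4),\mathrm{sgn}(\xi))$ onto $\{1,2,3,4\}\times\{\pm\}$. A star transposition $\tau_k$ both flips the sign and sends $\xi(4)$ to $\xi(k)$, so the induced adjacency rule is $(i,s)\sim(j,s')$ iff $i\neq j$ and $s'=-s$. This is $K_{4,4}$ with the perfect matching $\{(i,+)(i,-)\}_{i}$ removed, a standard description of the cube $Q$.

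For $X_3/\langle(12)\rangle$ I would apply analogous bookkeeping: since $(12)$ fixes both $3$ and $4$, the assignment $\xi H\mapsto(\xi(4),\xi(3))$ is a bijection from $\mathfrak{S}_4/H$ onto the $12$ ordered pairs of distinct elements in $\{1,2,3,4\}$. Tracing the action of $\tau_k$ through this bijection shows that $\tau_1$ and $\tau_2$ replace the first coordinate by one of the two elements outside $\{a,b\}$ while preserving the second, whereas $\tau_3$ swaps the two coordinates. Hence each vertex $(a,b)$ has two neighbors sharing its second coordinate and one ``swap'' neighbor $(b,a)$, so the $12$ vertices partition into four pairwise-disjoint triangles indexed by the second coordinate, joined by a single external edge between each pair of triangles. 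This is the standard ``blow-up of $K_4$'' description of the truncated tetrahedron $T$.

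The step I expect to require the most care is this last identification with $T$: the cube is pinned down up to isomorphism by the easy observation that it is the unique bipartite $3$-regular graph on $8$ vertices, but $T$ is only pinned down by the more subtle ``four triangles, one external edge between each pair'' description. Choosing $(\xi(4),\xi(3))$ as the coset parametrization is what makes this structure visible, and it is natural because the joint stabilizer of $3$ and $4$ in $\mathfrak{S}_4$ is precisely $H$.
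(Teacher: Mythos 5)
Your proposal is correct and follows the same skeleton as the paper's proof: invoke the Galois correspondence, observe that up to conjugacy only the subgroups $A_3=\langle(1,2,3)\rangle$ and $\langle(1,2)\rangle$ yield new intermediate covers, and identify the two quotients with $Q$ and $T$. The only real difference is in how the identifications are carried out: the paper verifies $X_3/\langle(1,2)\rangle\cong T$ and $X_3/A_3\cong Q$ by direct inspection of labelled figures, whereas you parametrize the cosets by the invariants $\bigl(\xi(4),\mathrm{sgn}(\xi)\bigr)$ and $\bigl(\xi(4),\xi(3)\bigr)$ and read off the adjacency rules ($K_{4,4}$ minus a perfect matching, and four triangles joined by single edges, respectively). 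Your version is more self-contained and checkable without pictures, and both of your invariant choices and the resulting adjacency computations are correct; this is a sound, and arguably cleaner, way to finish the argument.
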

\begin{proof}
 Since the star graph $X_3$ is a ${\mathfrak S}_3$-cover of $K_4$,
 we have intermediate covers corresponding to the non-trivial
 four subgroups $\langle(1,2)\rangle, \langle(2,3)\rangle, \langle(1,3)\rangle,$
 and $\langle(1,2,3)\rangle$.

 By identifying the $X_3$'s vertices in the orbit of the action by
 subgroup $\langle(1,2)\rangle$, we obtain the quotient graph
 $X_3/\langle(1,2)\rangle$ which is isomorphic the truncated
 tetrahedron graph $T$. See Figure $\ref{fig:truncatedtetrahedron}$.
 By symmetry, we see that $X_3/\langle(1,2)\rangle \cong X_3/\langle(2,3)\rangle \cong X_3/\langle(1,3)\rangle$.

\begin{center}
 \begin{figure}[H]
  \begin{center}
   \begin{tikzpicture}
    \tiny
    \begin{scope}
    \clip (1,0)  +(0:1) -- +(30:3.5) -- +(90:3.5) -- +(150:3.5) -- +(210:3.5) -- +(270:3.5) -- +(330:3.5) -- +(30:3.5);
    \begin{scope}[xshift=1cm]
    \coordinate (v0) at (0:1);
    \coordinate (v1) at (60:1);
    \coordinate (v2) at (120:1);
    \coordinate (v3) at (180:1);
    \coordinate (v4) at (240:1);
    \coordinate (v5) at (300:1);
    \draw[fill = blue, opacity=0.3] (v0) -- (v1) -- (v2) -- (v3) -- (v4) -- (v5) -- cycle;
    \end{scope}

    \begin{scope}[xshift=4cm]
    \coordinate (v0) at (0:1);
    \coordinate (v1) at (60:1);
    \coordinate (v2) at (120:1);
    \coordinate (v3) at (180:1);
    \coordinate (v4) at (240:1);
    \coordinate (v5) at (300:1);
    \draw[fill = orange, opacity=0.3] (v0) -- (v1) -- (v2) -- (v3) -- (v4) -- (v5) -- cycle;
    \end{scope}

    \begin{scope}[xshift=-2cm]
    \coordinate (v0) at (0:1);
    \coordinate (v1) at (60:1);
    \coordinate (v2) at (120:1);
    \coordinate (v3) at (180:1);
    \coordinate (v4) at (240:1);
    \coordinate (v5) at (300:1);
    \draw[fill = orange, opacity=0.3] (v0) -- (v1) -- (v2) -- (v3) -- (v4) -- (v5) -- cycle;
    \end{scope}

    \begin{scope}[xshift=2.5cm, yshift=-{1.5*1.732}cm]
    \coordinate (v0) at (0:1);
    \coordinate (v1) at (60:1);
    \coordinate (v2) at (120:1);
    \coordinate (v3) at (180:1);
    \coordinate (v4) at (240:1);
    \coordinate (v5) at (300:1);
    \draw[fill = red, opacity=0.3] (v0) -- (v1) -- (v2) -- (v3) -- (v4) -- (v5) -- cycle;
    \end{scope}
    \begin{scope}[xshift=-{0.5}cm, yshift=-{-1.5*1.732}cm]
    \coordinate (v0) at (0:1);
    \coordinate (v1) at (60:1);
    \coordinate (v2) at (120:1);
    \coordinate (v3) at (180:1);
    \coordinate (v4) at (240:1);
    \coordinate (v5) at (300:1);
    \draw[fill = red, opacity=0.3] (v0) -- (v1) -- (v2) -- (v3) -- (v4) -- (v5) -- cycle;
    \end{scope}

    \begin{scope}[xshift=2.5cm, yshift=-{-1.5*1.732}cm]
    \coordinate (v0) at (0:1);
    \coordinate (v1) at (60:1);
    \coordinate (v2) at (120:1);
    \coordinate (v3) at (180:1);
    \coordinate (v4) at (240:1);
    \coordinate (v5) at (300:1);
    \draw[fill = green, opacity=0.3] (v0) -- (v1) -- (v2) -- (v3) -- (v4) -- (v5) -- cycle;
    \end{scope}
    \begin{scope}[xshift=-{0.5}cm, yshift=-{1.5*1.732}cm]
    \coordinate (v0) at (0:1);
    \coordinate (v1) at (60:1);
    \coordinate (v2) at (120:1);
    \coordinate (v3) at (180:1);
    \coordinate (v4) at (240:1);
    \coordinate (v5) at (300:1);
    \draw[fill = green, opacity=0.3] (v0) -- (v1) -- (v2) -- (v3) -- (v4) -- (v5) -- cycle;
    \end{scope}
    \end{scope}
    
    \input{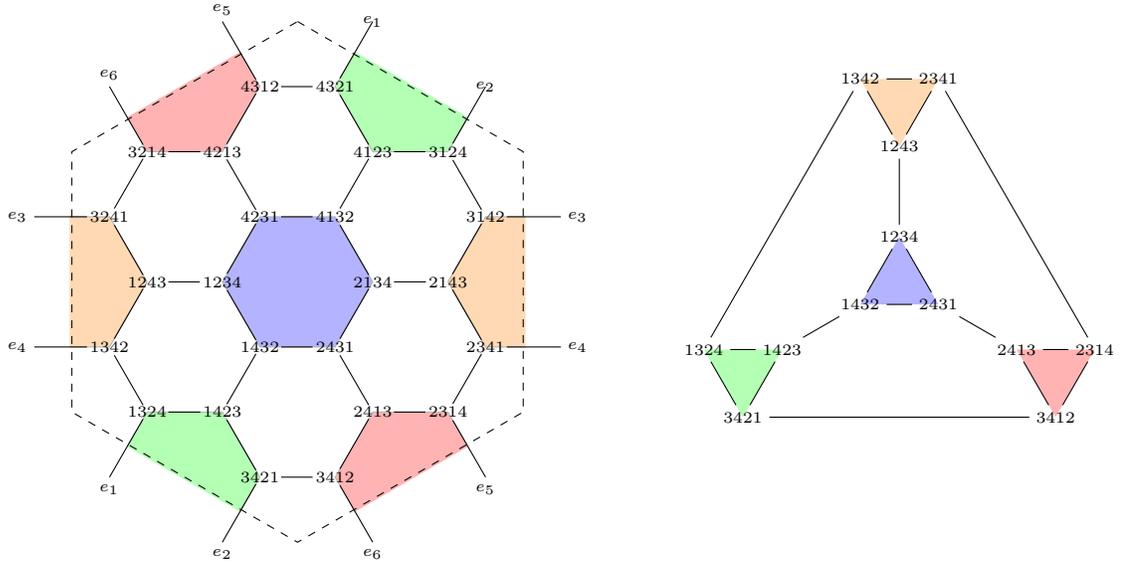}

    \begin{scope}[xshift=9cm,scale=0.6]

     \fill[blue,opacity=0.3](90:1) -- (210:1) -- (330:1) -- cycle;
     \begin{scope}[yshift=4cm]
     \fill[orange,opacity=0.3](30:1) -- (150:1) -- (270:1) -- cycle;
     \end{scope}
     \begin{scope}[xshift=-{2*1.732}cm, yshift=-2cm]
     \fill[green,opacity=0.3](30:1) -- (150:1) -- (270:1) -- cycle;
     \end{scope}
     \begin{scope}[xshift=-{-2*1.732}cm, yshift=-2cm]
     \fill[red,opacity=0.3](30:1) -- (150:1) -- (270:1) -- cycle;
     \end{scope}

    \draw[] (90:1) node (1a) {$1234$};
    \draw (210:1) node (2a) {$1432$};
    \draw (330:1) node (3a) {$2431$};

    \draw (90:4)+(-90:1) node (4b) {$1243$};
    \draw (90:4)+(30:1) node (3b) {$2341$};
    \draw (90:4)+(150:1) node (2b) {$1342$};

    \draw (210:4)+(30:1) node (4c) {$1423$};
    \draw (210:4)+(150:1) node (1c) {$1324$};
    \draw (210:4)+(270:1) node (3c) {$3421$};

    \draw[] (-30:4)+(150:1) node (4d) {$2413$};
    \draw (-30:4)+(270:1) node (2d) {$3412$};
    \draw (-30:4)+(30:1) node (1d) {$2314$};

    \draw (1a) -- (2a) -- (3a) -- (1a);
    \draw (3b) -- (2b) -- (4b) -- (3b);
    \draw (3c) -- (4c) -- (1c) -- (3c);
    \draw (1d) -- (4d) -- (2d) -- (1d);

    \draw (1a) -- (4b);
    \draw (2a) -- (4c);
    \draw (3a) -- (4d);

    \draw (2b) -- (1c);
    \draw (3c) -- (2d);
    \draw (1d) -- (3b);
    \end{scope}
   \end{tikzpicture}
   \caption{Left: $X_3={\rm Cay}({\mathfrak S}_4,\{(1,4),(2,4),(3,4)\})$.
   Right: The orbits of the right action by the subgroup $\langle(1,2)\rangle\in G_3\cong {\mathfrak S}_3$
   makes a quotient graph of $X_3$ which is isomorphic to the truncated tetrahedron
   graph.}
   \label{fig:truncatedtetrahedron}
  \end{center}
 \end{figure}
\end{center}

\begin{center}
 \begin{figure}[H]
  \begin{center}
   \begin{tikzpicture}
     \draw (45:1) node[draw, inner sep=1] (1i) {$1$};
     \draw (135:1) node[draw, inner sep=1,circle] (2i) {$2$};
     \draw (225:1) node[draw, inner sep=1] (3i) {$3$};
     \draw (315:1) node[draw, inner sep=1,circle] (4i) {$4$};

     \draw (45:3) node[draw, inner sep=1,circle] (3o) {$3$};
     \draw (135:3) node[draw, inner sep=1] (4o) {$4$};
     \draw (225:3) node[draw, inner sep=1,circle] (1o) {$1$};
     \draw (315:3) node[draw, inner sep=1] (2o) {$2$};

     \draw (1i) -- (2i) -- (3i) -- (4i) -- (1i);
     \draw (1o) -- (2o) -- (3o) -- (4o) -- (1o);

     \draw (1i) -- (3o);
     \draw (2i) -- (4o);
     \draw (3i) -- (1o);
     \draw (4i) -- (2o);
   \end{tikzpicture}
   \caption{The orbits of the right action by the subgroup $\langle(1,2,3)\rangle$
   make a quotient graph of $X_3$ which is isomorphic to the cube graph.
   The vertex marked \fbox{$i$} represents the orbit containing
   the even permutation $\xi\in {\mathfrak S}_{4}$ with $\xi(4)=i$,
   and that marked \textcircled{$i$} corresponds to the odd permutations.
   }
   \label{fig:cube}
  \end{center}
 \end{figure}
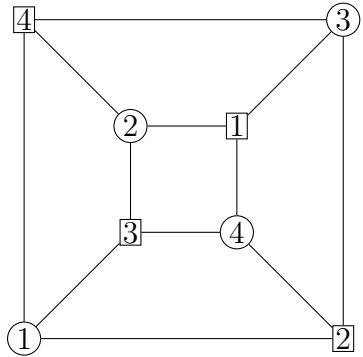 
\end{center}
 In the same manner we can confirm that the quotient graph
 $X_3/\langle(1,2,3)\rangle$ is isomorphic to the cube graph $Q$.
 See Figure $\ref{fig:cube}$.
\end{proof}

\section{Spectral properties}

In the same manner as in \cite{ishikawa2024non},
we use the Ihara zeta function
to study the spectral properties of the graphs,
where we  view an undirected graph
as a bidirected graph, that is,
each edge is replaced by two directed edges of
opposite directions.
The opposite of a directed edge $e$ is denoted $\overline{e}$.

Let $C=(e_0,e_1,\ldots, e_{l-1})$ be a cycle in
a graph $\Gamma$.
Then $C$ is {\em non-backtracking} if
\[
 e_i\neq \overline{e}_{i+1} 
\]
for every $i\in {\mathbb Z}/l{\mathbb Z}$.
The {\em length} $\nu(C)$ of a cycle $C$ is defined by
\[
 \nu(C) = l.
\]
A non-backtracking cycle $C$ is a {\em prime cycle} 
if there exists no pair of a cycle $D$ and an integer $k>1$ such that
\[
 C = D^k.
\]
Let $C=(e_0,e_1,\ldots, e_{l-1})$ be a cycle in a
graph $\Gamma$.
We  introduce an equivalence relation $\sim$ to the set of prime cycles
in $\Gamma$ as follows.
Let $C$ and $C'$ be two prime cycles in $\Gamma$.
Then we define that $C\sim C'$ if  there exists a $k\in {\mathbb Z}/l{\mathbb Z}$ such that
\[
 C' = (e_k, e_{k+1}, \ldots, e_{k+l-1}).
\]
That is, $C'$ is obtained from $C$ by changing the starting vertex.
This relation defines a equivalence relation on the set of all prime cycles in $\Gamma$,
and we denote the equivalence class $[C]$
to which $C$ belongs.
We call $[C]$ a {\em prime} in $\Gamma$.

The {\em zeta function} $\zeta_{\Gamma}$ of a graph 
$\Gamma$ is defined by
\[
 \zeta_{\Gamma}(u) = 
 \prod_{[C]}
 \left(1-u^{\nu(C)}\right)^{-1},
\]
where $[C]$ runs through the primes in $\Gamma$.
The following theorem shows how the zeta functions
are related to the spectra of $\Gamma$.
\begin{thm}
 \label{thm:zeta}
 {\rm (\cite{ihara1966discrete, bass1992ihara})}
 ~Let $A$ be the adjacency matrix of $\Gamma$, and
 let $Q$ be the diagonal matrix
 whose diagonal entry corresponding to the vertex $v$ is ${\rm deg}(v)-1$.
 Then we have
 \[
  \zeta_{\Gamma}(u)
 =
 \left(
 (1-u^2)^{r-1}
 \det\left(I - uA + u^2Q\right)\right)^{-1},
 \]
 where $r-1=\frac{1}{2}{\rm Tr}\left(Q-I\right)$.
\end{thm}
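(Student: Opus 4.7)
The plan is to follow Bass's strategy \cite{bass1992ihara}, factoring the proof through the \emph{Hashimoto edge matrix} $B$ indexed by the $2|E(\Gamma)|$ directed edges, with $B_{e,f}=1$ exactly when $t(f)=o(e)$ and $e\neq\overline{f}$ (so $e$ non-backtrackingly follows $f$), and $B_{e,f}=0$ otherwise. The proof then splits into two essentially independent steps: first, the combinatorial identity $\zeta_\Gamma(u)^{-1}=\det(I-uB)$; second, Bass's determinant identity $\det(I-uB)=(1-u^2)^{r-1}\det(I-uA+u^2Q)$.

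For the first step I would take the logarithm of the Euler product defining $\zeta_\Gamma$ and expand each factor as a geometric series. Unwinding the equivalence relation on prime cycles, the cyclic rotations of a prime of length $\ell$ account for the factor $1/\ell$ while the powers fill in non-prime closed non-backtracking walks, yielding
\[
\log\zeta_\Gamma(u)=\sum_{n\geq 1}\frac{N_n}{n}u^n,
\]
where $N_n$ is the number of closed non-backtracking walks of length $n$ with a marked starting directed edge. The combinatorial interpretation of matrix powers gives $N_n=\mathrm{Tr}(B^n)$, hence $\log\zeta_\Gamma(u)=-\log\det(I-uB)$, and the first identity follows by exponentiation.

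For the second step I would introduce the $|V|\times 2|E|$ incidence matrices $S$ and $T$ with $S_{v,e}=[o(e)=v]$ and $T_{v,e}=[t(e)=v]$, together with the edge-reversal involution $J$ with $J_{e,f}=[f=\overline{e}]$, verifying the relations $TS^{\top}=A$, $TT^{\top}=Q+I$, $SJ=T$, $J^2=I$, and crucially $B=S^{\top}T-J$. The key move is to construct a $(|V|+2|E|)\times(|V|+2|E|)$ block matrix (or equivalently a triangular factorisation thereof) whose determinant admits two evaluations via Schur complements: one evaluation yields $(1-u^2)^{|E|}\det(I_{|V|}-uA+u^2Q)$, using $\det(I-uJ)=(1-u^2)^{|E|}$ coming from the $\pm 1$ spectrum of the involution $J$; the other yields $(1-u^2)^{|V|}\det(I_{2|E|}-uB)$. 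Equating and simplifying gives the claimed identity, with $r-1=|E|-|V|=\tfrac{1}{2}\mathrm{Tr}(Q-I)$ since $\mathrm{Tr}(Q)=\sum_v(\deg(v)-1)=2|E|-|V|$.

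The chief obstacle is precisely this block-matrix manipulation: one must select the correct auxiliary matrix so that the two Schur reductions separate cleanly, and the interplay between the reversal involution $J$ and the factor $(1-u^2)$ is the source of the correction $(1-u^2)^{r-1}$ on the right-hand side. Sign conventions and the distinction between $S$, $T$, and their transposes in the bidirected setup are easy to confuse, so the verification of the incidence identities must be done carefully. Once that is in place, the remainder of the argument is routine linear algebra and bookkeeping.
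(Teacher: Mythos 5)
The paper itself gives no proof of this theorem; it is quoted from Ihara and Bass, and your outline is precisely Bass's standard two-step argument from the cited reference. Step one (the Euler-product manipulation giving $\zeta_\Gamma(u)^{-1}=\det(I-uB)$, with $N_n=\mathrm{Tr}(B^n)$ counting cyclically non-backtracking closed walks with a marked start, each arising $\nu(D)$ times from rotations of a power of a prime $D$) is complete and correct. Your incidence relations in step two are also all correct. The only genuine gap is the one you flag yourself: you assert that a suitable block matrix exists without exhibiting it, and that identity is the entire content of Bass's half of the theorem, so as written the argument is not closed. The matrix that makes your two Schur reductions work is
\[
M=\begin{pmatrix} I_{|V|} & uS \\ T^{\top} & I_{2|E|}+uJ \end{pmatrix}.
\]
Eliminating the edge block and using $(I+uJ)^{-1}=\tfrac{1}{1-u^2}(I-uJ)$, $ST^{\top}=A$, $SJT^{\top}=TT^{\top}=Q+I$ gives $\det M=(1-u^2)^{|E|-|V|}\det\bigl(I-uA+u^2Q\bigr)$; eliminating the vertex block gives $\det M=\det\bigl(I+uJ-uT^{\top}S\bigr)=\det\bigl(I-uB^{\top}\bigr)=\det(I-uB)$, since $T^{\top}S-J=B^{\top}$ in your convention. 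Equating the two evaluations (valid for $u\neq\pm1$ and hence identically, as both sides are polynomials) yields the theorem with $r-1=|E|-|V|=\tfrac12\mathrm{Tr}(Q-I)$, exactly as you computed. With that one display supplied, your proof is the standard one and is correct.
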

When we apply Theorem $\ref{thm:zeta}$ to $X_n$ and its quotients
by $H\leq {\mathfrak S}_n$,
we have $Q=(n-1)I$ and
\begin{eqnarray}
 \zeta_{X_n/H}(u) & = & 
  (1-u^2)^{-\frac{(n+2)(n+1)!}{2|H|}}u^{-\frac{(n+1)!}{|H|}}
 \det\left(\frac{(n-1)u^2+1}{u}I-A\right)^{-1}\\
 & = & 
  (1-u^2)^{-\frac{(n+2)(n+1)!}{2|H|}}u^{-\frac{(n+1)!}{|H|}}
  P_{X_n/H}\left(\frac{(n-1)u^2+1}{u}\right)^{-1},
 \label{eq:zetacharacteristic}
\end{eqnarray}
where $P_{\Gamma}(x)$ is the characteristic polynomial
of the adjacency matrix of a graph $\Gamma$.

Let $\Gamma/\Delta$ be a Galois cover with the covering map $p: \Gamma\to \Delta$.
Let $C=(e_0, e_1, \ldots, e_{l-1})$ be a prime cycle of $\Delta$,
and $\widetilde{C}=(f_0,f_1,\ldots, f_{l-1})$ be a lift 
of $C$ in $\Gamma$, that is, $\widetilde{C}$ is a path of length $l$
in $\Gamma$ whose projection $\pi(\widetilde{C})$ onto $\Delta$ is $C$.
Let $o(\widetilde{C})$ be the starting vertex of $\widetilde{C}$ and
$t(\widetilde{C})$ the terminal vertex.
Then, since $\Gamma/\Delta$ is a Galois cover, and
both $o(\widetilde{C})$ and $t(\widetilde{C})$ are projected onto the same vertex $v$, 
there exits a unique automorphism $g$ in ${\rm Gal}(\Gamma/\Delta)$ such
that $o(C)g = t(C)$. This unique automorphism $g$ is denoted
\[
\left[\frac{\Gamma/\Delta}{\widetilde{C}}\right]. 
\]
We call the automorphism $\left[\frac{\Gamma/\Delta}{\widetilde{C}}\right]$
the {\em Frobenius automorphism} of $\widetilde{C}$
associated with the Galois cover $\Gamma/\Delta$.
Then, we introduce the $L$-function.
Let $\rho$ be an irreducible representation of ${\rm Gal}(\Gamma/\Delta)$.
The {\em Artin $L$-function} $L(u, \rho, \Gamma/\Delta)$ 
of $\Gamma/\Delta$ associated
with $\rho$ is defined by
\begin{equation}
 \label{eq:Lfundef}
 L(u, \rho, \Gamma/\Delta) = 
 \prod_{[C]}\det\left(I-\rho\left(\left[\frac{\Gamma/\Delta}{\widetilde{C}}\right]\right)u^{\nu(C)}\right)^{-1},
\end{equation}
where $[C]$ in the product runs through the primes of $\Delta$
and it is known that 
$\det\left(I-\rho\left(\left[\frac{\Gamma/\Delta}{\widetilde{C}}\right]\right)u^{\nu(C)}\right)$ 
does not depend on the choice of $\widetilde{C}$.
The following proposition is shown in
\cite[p.154--155]{terras2010zeta}.
\begin{prop}[\cite{terras2010zeta}]
 \label{prop:factorization}
 Let $\Gamma/\Delta$ be a Galois cover,
 and $\widetilde{\Delta}$ be
 an intermediate cover of $\Gamma/\Delta$.
 Then, an irreducible representation  $\widetilde{\rho}$ of ${\rm Gal}(\widetilde{\Delta}/\Delta)$
 can be lifted to the irreducible representation ${\rho}$
 of $G={\rm Gal}(\Gamma/\Delta)$ and we have
 \begin{equation}
  \label{eq:Lfun1}
  L(u,\rho,\Gamma/\Delta) =  L(u, \widetilde{\rho}, \widetilde{\Delta}/\Delta).
 \end{equation}

 Let $H$ be a (not necessarily normal) subgroup of $G$,
 and let $M = \Gamma/H$ be the quotient graph of $\Gamma$
 by the action of $H$.
 Then $M$ is an intermediate cover of $\Gamma/\Delta$.
 Let $\rho$ be an irreducible representation of $H$ and
 let $\rho^\# = {\rm Ind}_H^G\rho$. Then, we have
 \begin{equation}
  \label{eq:Linducedrep}
  L(u, \rho, \Gamma/M) = L(u, \rho^\#, \Gamma/\Delta).  
 \end{equation}

 The zeta function $\zeta_\Gamma$ is factorized into the products of
 $L$-functions:
 \begin{equation}
  \label{eq:zetafactorization}
  \zeta_\Gamma(u)
 =
 \prod_{\chi\in \widehat{{\rm Gal}(\Gamma/\Delta)}}
 L(u, \chi, \Gamma/\Delta)^{d_\chi},
 \end{equation}
where $\widehat{{\rm Gal}(\Gamma/\Delta)}$ is the
set of the irreducible presentations of ${\rm Gal}(\Gamma/\Delta)$
and $d_\chi$ is the degree of the representation $\chi$.
\end{prop}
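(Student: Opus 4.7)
The plan is to prove the three claims in turn, treating the Artin/Ihara $L$-function formalism for graphs in direct parallel to its classical number-theoretic counterpart. Throughout I will fix a prime $[C]$ of $\Delta$ with a chosen lift $\widetilde{C}$ in $\Gamma$, and work with Frobenius classes as in \cite{terras2010zeta}.

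For \eqref{eq:Lfun1}, I would unwind the definition of the lift: if $H = {\rm Gal}(\Gamma/\widetilde{\Delta})$, then $H \triangleleft G$ and $\widetilde{\rho}$ of ${\rm Gal}(\widetilde{\Delta}/\Delta) = G/H$ is lifted by $\rho = \widetilde{\rho}\circ \pi$ where $\pi: G \to G/H$ is the quotient. The key geometric point is that if $\widetilde{C}$ is a lift of $C$ to $\Gamma$, its projection $q(\widetilde{C})$ to $\widetilde{\Delta}$ is a lift of $C$ to the intermediate cover, and by chasing endpoints one verifies that the Frobenius transforms naturally:
\[
\left[\frac{\widetilde{\Delta}/\Delta}{q(\widetilde{C})}\right] = \pi\!\left(\left[\frac{\Gamma/\Delta}{\widetilde{C}}\right]\right).
\]
Substituting into \eqref{eq:Lfundef} and using $\rho = \widetilde{\rho}\circ\pi$ makes each Euler factor coincide, giving the equality.

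For \eqref{eq:Linducedrep}, I would imitate Artin's theorem for induced representations. Fix a prime $[C]$ of $\Delta$ and enumerate the primes $[D_1],\ldots,[D_s]$ of $M = \Gamma/H$ lying above it, each with residue degree $f_j$ so that $\nu(D_j) = f_j\,\nu(C)$. For each $D_j$ choose a lift $\widetilde{D}_j$ in $\Gamma$; its Frobenius in $\Gamma/M$ equals a power of $\left[\frac{\Gamma/\Delta}{\widetilde{C}}\right]^{f_j}$ conjugated by the transition element that carries $\widetilde{C}$ to $\widetilde{D}_j$'s starting point, i.e.\ the orbits of $H$ on the cosets $G/\langle \text{Frob}\rangle$ correspond to the primes above $[C]$. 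The standard Mackey/induction identity for the characteristic polynomial
\[
\det\!\bigl(I - (\mathrm{Ind}_H^G \rho)(g)\,u\bigr) = \prod_{j}\det\!\bigl(I - \rho(g_j^{f_j})\,u^{f_j}\bigr)
\]
applied factor-by-factor converts the Euler product of $L(u,\rho^\#,\Gamma/\Delta)$ into that of $L(u,\rho,\Gamma/M)$.

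For \eqref{eq:zetafactorization}, I would first establish the identity $\zeta_\Gamma(u) = L(u, \mathrm{reg}_G, \Gamma/\Delta)$ by taking $H = \{1\}$ and $\rho$ the trivial representation in \eqref{eq:Linducedrep}: then $\rho^\# = \mathrm{reg}_G$ and $\Gamma/H = \Gamma$, so $L(u, \mathbf{1}, \Gamma/\Gamma) = \zeta_\Gamma(u)$ (the definition collapses to the Ihara product). Decomposing $\mathrm{reg}_G = \bigoplus_{\chi \in \widehat{G}} \chi^{\oplus d_\chi}$ and noting that $L$-functions are multiplicative under direct sum of representations (immediate from \eqref{eq:Lfundef} and $\det$ on block-diagonals) yields \eqref{eq:zetafactorization}.

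The main obstacle is the careful bookkeeping in part \eqref{eq:Linducedrep}: one must match the combinatorics of how a prime $[C]$ of $\Delta$ splits into primes of the intermediate cover $M$ (with splitting indices governed by the double cosets $H\backslash G/\langle \mathrm{Frob}\rangle$) with the algebraic decomposition furnished by Frobenius reciprocity. Once that correspondence is pinned down, each of the three equalities reduces to a routine Euler-factor identification, which is why these claims are packaged together in \cite[pp.~154--155]{terras2010zeta}.
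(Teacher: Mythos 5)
The paper offers no proof of this proposition at all --- it simply cites \cite[pp.~154--155]{terras2010zeta} --- and your sketch reproduces the standard argument from that source: Frobenius compatibility with the quotient map for the lifted representation, the Artin induction step matching the splitting of a prime of $\Delta$ into primes of $M$ (governed by the double cosets $H\backslash G/\langle\mathrm{Frob}\rangle$) against the determinant identity for induced representations, and the decomposition of the regular representation to get the factorization of $\zeta_\Gamma$. So the approach is essentially the same as (the reference behind) the paper's; the only quibble is notational, since the Euler factor in your induction identity should read $\det\bigl(I-\rho(g_j\,\sigma^{f_j}g_j^{-1})\,u^{f_j}\bigr)$ with $\sigma$ the Frobenius of $\widetilde{C}$ and $g_j$ the transition element, rather than $\rho(g_j^{f_j})$.
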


In the following theorem,
we show a zeta functions relation of the intermediate coverings 
of the ${\mathfrak S}_3$-cover,
which is shown in \cite{stark2000zeta,terras2010zeta}
for a specific example, 
without the proof for general ${\mathfrak S}_3$-coverings.
\begin{thm}
\label{thm:zeta-relation-S3}
Let $p:Y\to X$ be a Galois cover of finite graphs with
$\mathrm{Gal}(Y/X)\cong {\mathfrak S}_3$.  Let
$Q=Y/\langle(1,2,3)\rangle$ and $T=Y/\langle(1,2)\rangle$ be the
intermediate coverings corresponding to the cyclic subgroups 
$C_3=\langle(1,2,3)$ and $C_2=\langle(1,2)\rangle$ respectively.  
Then we have the identity
\[
\zeta_Y(u)\,\zeta_X(u)^2 = \zeta_Q(u)\,\zeta_T(u)^2.
\]
\end{thm}

\begin{proof}
Write $L(u,\chi,Y/H)$ for the $L$-function of a representation $\chi$ of a
subgroup $H$ (for the covering $Y\to Y/H$).  
Since ${\mathfrak S}_3$ has three  irreducible representations,
the trivial representation $1$, the sign representation ${\rm sgn}$,
and the standard representation ${\rm std}$,
the factorization formula  $(\ref{eq:zetafactorization})$ yields
\begin{equation}
 \label{eq:factorZetaY}
\zeta_Y(u)=\zeta_X(u)\,L(u,\mathrm{sgn},Y/X)\,L(u,\mathrm{std},Y/X)^2.
\end{equation}

Since the subgroup $C_3=\langle(1\,2\,3)\rangle$ is normal in ${\mathfrak S}_3$,
$Q=Y/C_3$ is a Galois cover of $X$ whose Galois group
is ${\mathfrak S}_3/\langle(1,2,3)\rangle\cong C_2$, we have a factorization
\[
 \zeta_Q(u) = \zeta_X(u)L(u, {\rm sgn}_Q, Q/X),
\]
where ${\rm sgn}_Q$ is the sign representation of $Gal(Q/X)$.
By $(\ref{eq:Lfun1})$, we have
\[
L(u, {\rm sgn}, Y/X) = L(u, {\rm sgn}_Q, Q/X),
\]
therefore we have
\begin{equation}
 \label{eq:Lsgn}
L(u,\mathrm{sgn},Y/X)=\frac{\zeta_Q(u)}{\zeta_X(u)}.
\end{equation}

For the subgroup $C_2=\langle(1\,2)\rangle$ note that
$\mathrm{Ind}_{C_2}^{{\mathfrak S}_3}\mathrm{sgn}_{C_2}\cong \mathrm{sgn}\oplus\mathrm{std}$
where ${\rm sgn}_{C_2}$ is the sign representation of $C_2$.
Then, by the induction property $(\ref{eq:Linducedrep})$
of $L$-functions, we have
\[
L(u,\mathrm{sgn},Y/T)=L\big(u,\mathrm{Ind}_{C_2}^{{\mathfrak S}_3}\mathrm{sgn},Y/X\big)
= L(u,\mathrm{sgn},Y/X)\,L(u,\mathrm{std},Y/X).
\]
Applying the factorization property $(\ref{eq:zetafactorization})$ to
the cover $Y\to Y/C_2$ whose Galois group is $C_2$, we have
\begin{eqnarray*}
 \zeta_Y(u) & = & \zeta_T(u) L(u,{\rm sgn}_{C_2}, Y/T)\\
 & = & \zeta_T(u) L(u,{\rm Ind}_{C_2}^{{\mathfrak S}_3}{\rm sgn}_{C_2}, Y/X)\\
 & =& \zeta_T(u)L(u,\mathrm{sgn},Y/X)\,L(u,\mathrm{std},Y/X)\\
 & = & \zeta_T(u)\frac{\zeta_Q(u)}{\zeta_X(u)}L(u,\mathrm{std},Y/X).
\end{eqnarray*}
Thus we have obtained
\begin{equation}
 \label{eq:Lstd}
  L(u,\mathrm{std},Y/X)
  = \frac{\zeta_Y(u)\,\zeta_X(u)}{\zeta_T(u)\,\zeta_Q(u)}.
\end{equation}
Substituting $(\ref{eq:Lsgn})$ and $(\ref{eq:Lstd})$ into
$(\ref{eq:factorZetaY})$ yields the claimed identity
\[
\zeta_Y(u)\,\zeta_X(u)^2 = \zeta_Q(u)\,\zeta_T(u)^2.
\]
This completes the proof.
\end{proof}

\begin{cor}
 \label{cor:specrel}
Let $Y, X, T$ and $Q$ be as in Theorem $\ref{thm:zeta-relation-S3}$.
Then we have
\[
 {\rm Spec}(Y) \cup {\rm Spec}(X) \cup {\rm Spec}(X)
 = 
 {\rm Spec}(Q) \cup {\rm Spec}(T) \cup {\rm Spec}(T).
\]
\end{cor}
\begin{proof}
 This can be expressed in terms of the characteristic polynomials
 of the adjacency matrices,
\begin{equation}
 \label{eq:chararel}
 P_{Y}(x)P_{X}(x)^2 = P_{Q}(x) P_{T}(x)^2,
\end{equation}
which follows from Theorem $\ref{thm:zeta-relation-S3}$ and $(\ref{eq:zetacharacteristic})$.
\end{proof}

\begin{rmk}
In fact, we can directly confirm this spectral relation by calculating the exact eigenvalues
\begin{eqnarray*}
 P_{X_3}(x) & = &(x+3)(x+2)^6(x+1)^3x^4(x-1)^3(x-2)^6(x-3).\\
 P_{K_4}(x) & = &(x+1)^3(x-3). \\
 P_{Q}(x) & = & (x+3)(x+1)^3(x-1)^3(x-3).\\
 P_{T}(x) & = &(x+2)^3(x+1)^3x^2(x-2)^3(x-3).
\end{eqnarray*}
The characteristic polynomial $P_{X_3}(x)$ can be computed using Theorem $\ref{thm:chapuyferay}$.
The following table shows the values $I_\lambda(k)$ and $f^\lambda$:
\[
 \begin{array}{c||c|c|c|c|c|c|c||c}
  \lambda\backslash k & 3 & 2 & 1 & 0 & -1 & -2 & -3 & f^\lambda\\
  \hline
   4 & 1 & 0 & 0 & 0 & 0 & 0 & 0 & 1 \\
  \hline
  31 & 0 & 2 & 0 & 0 & 1 & 0 & 0 & 3\\ 
  \hline
   22 & 0 & 0 & 0 & 2 & 0 & 0 & 0 & 2 \\ 
  \hline
   211 & 0 & 0 & 1 & 0 & 0 & 2 & 0 & 3\\ 
  \hline
   1111 & 0 & 0 & 0 & 0 & 0 & 0 & 1 & 1\\ 
 \end{array}
\]
$P_{K_4}(x)$ and $P_Q(x)$ can also be easily computed since $Q$ is a quadratic
bipartite cover of the complete graph $K_4$.
Thus, assuming $(\ref{eq:chararel})$,
we can also compute $P_{T}(x)$ without the help of computers.
\end{rmk}

\begin{exm}
 The quotient graph of the star graph $X_4$ by the Klein 4-subgroup
 $V=\langle(1,2)(3,4), (1,3)(2,4)\rangle$ is an ${\mathfrak S}_3$-cover of $K_5$,
 which has intermediate covers $Q$ and $T$ corresponding to
 the subgroups $A_4$ and $V(1,2)$ respectively.
 Then we have
\begin{eqnarray*}
 P_{X_4/V}(x) & = &(x+4)(x+2)^{10}(x+1)^4(x-1)^4(x-2)^{10}(x-4),\\
 P_{K_5}(x) & = &(x+1)^4(x-4), \\
 P_{Q}(x) & = & (x+4)(x+1)^4(x-1)^4(x-4),\\
 P_{T}(x) & = &(x+2)^5(x+1)^4(x-2)^5(x-4),
\end{eqnarray*}
where we can confirm the relation in Corollary $\ref{cor:specrel}$
\[
 P_{X_4/V}(x)P_{K_5}(x)^2 = P_Q(x) P_T(x)^2.
\]
\end{exm}

\begin{exm}
 Since $Y=X_5$ is an ${\mathfrak S}_5$-cover of $K_6$,
 it is also an ${\mathfrak S}_3$-cover of
 $X=X_5/{\mathfrak S}_3$. 
 Table $\ref{tab:X5}$ shows the
 spectrum of $X$, $Y$ and their intermediate
 coverings $Q=X_5/C_3$ and $T=X_5/C_2$,
 where we can confirm the relation  in Corollary $\ref{cor:specrel}$.
\begin{table}[ht]
\centering
\begin{tabular}{r||r|r|r|r|r|r|r|r|r|r|r}
 & -5 & -4 & -3 & -2 & -1 & 0 & 1 & 2 & 3 & 4 & 5 \\ 
  \hline
 $Y=X_5$ &   1 &  20 & 105 & 120 &  30 & 168 &  30 & 120 & 105 &  20 &   1 \\ 
 $T=X_5/C_2$ &   0 &   5 &  48 &  78 &  15 &  84 &  15 &  42 &  57 &  15 &   1 \\ 
 $Q=X_5/C_3$ &   1 &  10 &  29 &  36 &  10 &  68 &  10 &  36 &  29 &  10 &   1 \\ 
 $X=X_5/{\mathfrak S}_3$ &   0 &   0 &  10 &  36 &   5 &  34 &   5 &   0 &  19 &  10 &   1 \\ 
\end{tabular}
 \caption{The spectrum of the intermediate covers of the covering $Y/X$.}
 \label{tab:X5}
\end{table}
 
\end{exm}

\section{Geometric properties and Fourier analysis}

The covering properies of the star graph $X_3$ in
Theorem $\ref{thm:GaloisX3}$ can be explained in terms of 
the lattices and the crystallographic groups in ${\mathbb R}^2$.
We first give the formal definition
of the hexagonal lattice ${\mathcal L}$.

The hexagonal lattice graph ${\mathcal L}=(V({\mathcal L}), E({\mathcal L}))$
is an infinite bipartite graph whose vertex set is decomposed into
two disjoint subsets $B$ and $W$, defined by
\begin{equation}
 \label{eq:BW}
 B = (1,0) + {\mathbb Z}{\mathbf v}_1 + {\mathbb Z}{\mathbf v}_2,~~~~
 W = (0,0) + {\mathbb Z}{\mathbf v}_1 + {\mathbb Z}{\mathbf v}_2,
\end{equation}
where 
\begin{equation}
 \label{eq:defbase}
{\mathbf v}_1 = \left(\frac{3}{2}, \frac{\sqrt{3}}{2}\right), 
\mbox{~~ and ~~} {\mathbf v}_2 = \left(\frac{3}{2}, -\frac{\sqrt{3}}{2}\right).
\end{equation}
$V({\mathcal L})=B\cup W$ and a vertex in $B$ (resp. $W$) is called {\em black} (resp. {\em white}).
Two vertices $b\in B$ and $w\in W$ are connected by an edge
if and only if 
$\|b-w\|=1$,
and there is no edge connecting two vertices $v$ and $w$ if
$\{v,w\}\subset B$ or $\{v,w\}\subset W$.

 We define two sublattices $\Lambda_Q$ and $\Lambda_{X_3}$ of 
 the lattice ${\mathbb Z}{\mathbf v}_1 + {\mathbb Z}{\mathbf v}_2$
 by
 \[
  \Lambda_{Q} = {\mathbb Z}(2{\mathbf v}_1) + {\mathbb Z}(2{\mathbf v}_2),
  ~~ \mbox{ and } ~~
  \Lambda_{X_3} = {\mathbb Z}(2{\mathbf v}_1 + 2{\mathbf v}_2) + {\mathbb Z}(4{\mathbf v}_1-2{\mathbf v}_2).
 \]
 See Figure $\ref{fig:lattices}$. Then, it is clear that $\Lambda_{X_3}$ is a sublattice of $\Lambda_Q$ and
 \[
 \Lambda_{Q}/\Lambda_{X_3} \cong C_3.
 \]
\begin{center}
 \begin{figure}[H]
  \begin{center}
   \begin{tikzpicture}[scale=0.7]
    \clip (-3.3,-2.7) -- (9.3,-2.7) -- (9.3,7.3) -- (-3.3,7.3) -- cycle;
    \draw[dashed, red, line width = 1] (-2.5,{-sqrt(3)/2}) -- ++(-3,{-sqrt(3)});
    \draw[dashed, red, line width = 1] (-2.5,{-sqrt(3)/2})  ++(3,{-sqrt(3)}) -- ++(0,{-2*sqrt(3)});
    \draw[dashed, red, line width = 1] (-2.5,{-sqrt(3)/2})  ++(3,{-sqrt(3)})++(0,{4*sqrt(3)}) -- ++(0,{2*sqrt(3)});
    \draw[dashed, red, line width = 1] (-2.5,{-sqrt(3)/2})  ++(3,{-sqrt(3)}) ++(3,{sqrt(3)}) -- ++(3,{-sqrt(3)}) -- ++(30:3);
    \draw[dashed, red, line width = 1] (-2.5,{-sqrt(3)/2})  ++(6,{2*sqrt(3)}) -- ++(3,{sqrt(3)})-- ++(90:3);
    \draw[dashed, red, line width = 1] (-2.5,{-sqrt(3)/2})  ++(6,{2*sqrt(3)}) ++(3,{sqrt(3)})-- ++(-30:3);
    \draw[dashed, red, line width = 1] (-2.5,{-sqrt(3)/2})  ++(0,{2*sqrt(3)}) -- ++(-3,{sqrt(3)});
    \draw[dashed, red, line width = 1] (-2.5,{-sqrt(3)/2}) -- ++(3,{-sqrt(3)})
    -- ++(3,{sqrt(3)}) -- ++(0,{2*sqrt(3)}) -- ++(-3,{sqrt(3)}) -- ++(-3,{-sqrt(3)}) -- cycle;
    \foreach \y in {-4,...,7}{
    \foreach \x in {-2,...,7}{
    \draw ({(\x+\y)*3/2},{(\x-\y)*sqrt(3)/2}) -- ++(1,0);
    \draw ({(\x+\y)*3/2},{(\x-\y)*sqrt(3)/2}) -- ++({-1/2},{sqrt(3)/2});
    \draw ({(\x+\y)*3/2},{(\x-\y)*sqrt(3)/2}) -- ++({-1/2},{-sqrt(3)/2});
    }}
    \draw[->,>=latex,line width=2] (60:1) -- +(6,0) node[below]{$2{\mathbf v}_1 + 2{\mathbf v}_2$};
    \draw[->,>=latex,line width=2] (60:1) -- +(60:6) node[above]{$4{\mathbf v}_1 - 2{\mathbf v}_2$};
    \draw[blue, ->,>=latex,line width=2] (60:1) -- +(30:{2*sqrt(3)}) node[above]{$2{\mathbf v}_1$};
    \draw[blue, ->,>=latex,line width=2] (60:1) -- +(-30:{2*sqrt(3)}) node[below]{$2{\mathbf v}_2$};
   \end{tikzpicture}
  \end{center}
  \caption{$\Lambda_{X_3}$ and $\Lambda_Q$}
  \label{fig:lattices}
 \end{figure}
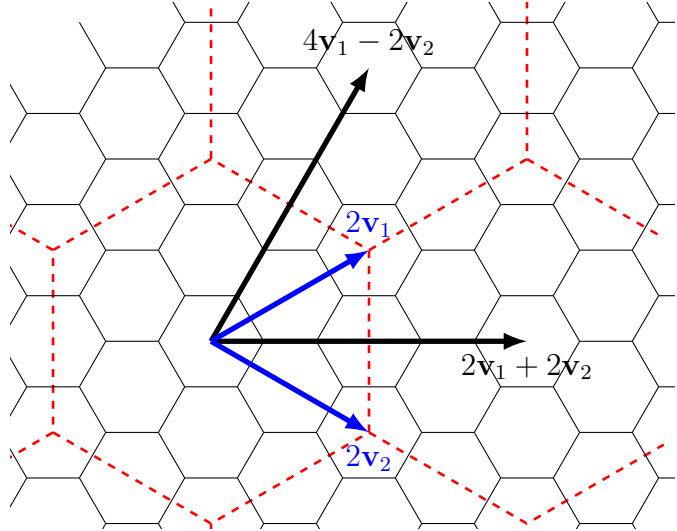
\end{center}

 We identify $\Lambda_Q$ (resp. $\Lambda_{X_3}$) with the group of isometries in ${\mathbb R}^2$
 consisting of the translations by vectors in $\Lambda_Q$ (resp. $\Lambda_{X_3}$),
 and let $G_{K_4}$ (resp. $G_T$) be the group of isometries in ${\mathbb R}^2$ generated by
 the translations by vectors in $\Lambda_{Q}$ (resp. $\Lambda_{X_3}$)
 and the half-turn rotation $R$
 around the origin $(0,0)\in {\mathbb R}^2$.
 Therefore we have
 \[
  G_{K_4} = \Lambda_{Q}\rtimes \langle{R}\rangle,
 \mbox {~~ and ~~}
 G_{T} = \Lambda_{X_3}\rtimes \langle{R}\rangle.
 \]
 Futher, we have
 \[
  G_{K_4}/\Lambda_{X_3} \cong {\mathfrak S}_3,~~~
  \Lambda_{Q}/\Lambda_{X_3} \cong C_3,~~\mbox{ and } ~~
  G_{T}/\Lambda_{X_3} \cong C_2.
 \]
\begin{prop}
 \label{prop:geom}
 The four groups $G_{K_4}, \Lambda_Q, G_T$, and $\Lambda_{X_3}$
 acts on the honeycomb lattice graph ${\mathcal L}$ as
 graph automorphisms and we have
 \[
 K_4 \cong {\mathcal L}/G_{K_4},~~~
 Q\cong {\mathcal L}/\Lambda_{Q},~~~
 T \cong {\mathcal L}/G_{T},~~\mbox{ and } ~~
 X_3 \cong {\mathcal L}/\Lambda_{X_3}.
 \]
\end{prop}

\begin{proof}
We define a map $p_V:V({\mathcal L})\to {\mathfrak S}_4$ as follows.
Each edge $e$ carries a weight $\sigma_e\in \{(1, 4), (2, 4), (3, 4)\}\subset {\mathfrak S}_4$.
Since each edge $e$ is incident to a black vertex $b\in B$
and a white vertex $w\in W$ with $\|b-w\|=1$,
we define
\[
 \sigma_e = \begin{cases}
	(3, 4) & \mbox{ if } w = b + (-1,0),\\
	(1, 4) & \mbox{ if } w = b + (1/2,\sqrt{3}/2),\\
	(2, 4) & \mbox{ if } w = b + (1/2,-\sqrt{3}/2).
       \end{cases}
\]
Then define $p_V(-1,0) = {\rm id}\in {\mathfrak S}_4$.
For a vertex $v\in V({\mathcal L})$, we choose an
arbitral path $(e_1, e_2, \ldots, e_l)$ starting from $(1,0)$
and ending at $v$.
We define 
\[
 p_V(v) = \sigma_{e_1}\sigma_{e_2}\cdots \sigma_{e_l}\in {\mathfrak S}_4.
\]
It is easily confirmed that $p_V(v)$ is well defined,
that is, it does not depend on the choice of the path to $v$.
(See Figure $\ref{fig:truncatedtetrahedron}$.)
Further $p_V$ is periodic, that is,
$p_V(v)=p_V(w)$ if and only if
$
v-w \in \Lambda_{X_3}.
$
Thus we have $X_3\cong {\mathcal L}/\Lambda_{X_3}$.
 
The fundamental domain for the action of $G_{K_4}$ (resp. $\Lambda_{Q}$)
on ${\mathbb R}^2$ is shown in the left (resp. right) side of
Figure $\ref{fig:fundDomain}$, and
the fundamental domain for the action of $G_{T}$
on ${\mathbb R}^2$ is shown in the Figure $\ref{fig:fundDomainT}$.
From these figures we can confirm the statement of this proposition.
\begin{center}
 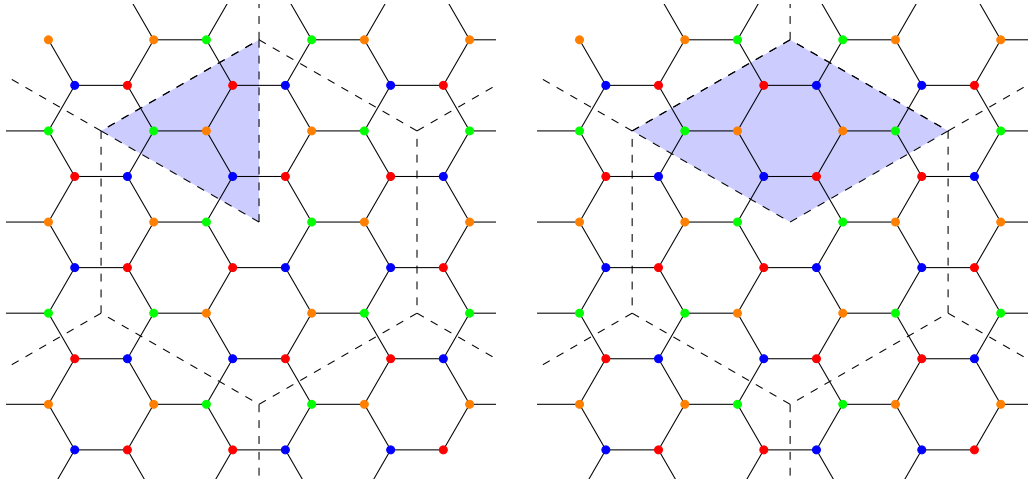
\begin{figure}[H]
  \begin{center}
   \begin{tikzpicture}[scale=0.7]
    \clip (-4.3,-4) -- (5,-4) -- (5,5) -- (-4.3,5) -- cycle;
    \draw[dashed,fill=blue, fill opacity=0.2] (0.5,{1.732/2}) -- +(0,{2*1.732}) -- +(150:{2*1.732}) -- cycle;
    \draw[dashed] (-2.5,{-sqrt(3)/2}) -- ++(-3,{-sqrt(3)});
    \draw[dashed] (-2.5,{-sqrt(3)/2})  ++(3,{-sqrt(3)}) -- ++(0,{-2*sqrt(3)});
    \draw[dashed] (-2.5,{-sqrt(3)/2})  ++(3,{-sqrt(3)})++(0,{4*sqrt(3)}) -- ++(0,{2*sqrt(3)});
    \draw[dashed] (-2.5,{-sqrt(3)/2})  ++(3,{-sqrt(3)}) ++(3,{sqrt(3)}) -- ++(3,{-sqrt(3)});
    \draw[dashed] (-2.5,{-sqrt(3)/2})  ++(6,{2*sqrt(3)}) -- ++(3,{sqrt(3)});
    \draw[dashed] (-2.5,{-sqrt(3)/2})  ++(0,{2*sqrt(3)}) -- ++(-3,{sqrt(3)});
    \foreach \y in {-3,...,3}{
    \foreach \x in {-3,...,4}{
    \draw ({(\x+\y)*3/2},{(\x-\y)*sqrt(3)/2}) -- ++(1,0);
    \draw ({(\x+\y)*3/2},{(\x-\y)*sqrt(3)/2}) -- ++({-1/2},{sqrt(3)/2});
    \draw ({(\x+\y)*3/2},{(\x-\y)*sqrt(3)/2}) -- ++({-1/2},{-sqrt(3)/2});
    }}
    \foreach \y in {-3,...,3}{
    \foreach \x in {-3,...,3}{
    \draw[red, fill=red] ({2*(\x+\y)*3/2},{2*(\x-\y)*sqrt(3)/2}) circle (0.075);
    \draw[blue, fill=blue] ({2*(\x+\y)*3/2},{2*(\x-\y)*sqrt(3)/2}) + (1,0) circle (0.075);
    \draw[green, fill=green] ({2*(\x+\y)*3/2},{2*(\x-\y)*sqrt(3)/2}) + ({-1/2},{sqrt(3)/2}) circle (0.075);
    \draw[orange, fill=orange] ({2*(\x+\y)*3/2},{2*(\x-\y)*sqrt(3)/2}) + ({-1/2},{-sqrt(3)/2}) circle (0.075);
    \draw[red, fill=red] ({2*(\x+\y)*3/2},{2*(\x-\y)*sqrt(3)/2}) + (-2,0)circle (0.075);
    \draw[blue, fill=blue] ({2*(\x+\y)*3/2},{2*(\x-\y)*sqrt(3)/2}) + (3,0) circle (0.075);
    \draw[green, fill=green] ({2*(\x+\y)*3/2},{2*(\x-\y)*sqrt(3)/2}) + ({3/2},{sqrt(3)/2}) circle (0.075);
    \draw[orange, fill=orange] ({2*(\x+\y)*3/2},{2*(\x-\y)*sqrt(3)/2}) + ({3/2},{-sqrt(3)/2}) circle (0.075);
    }}

    \draw[dashed] ({-2.5+3},{-sqrt(3)/2-sqrt(3)})
    -- ++(3,{sqrt(3)}) -- ++(0,{2*sqrt(3)}) -- ++(-3,{sqrt(3)}) -- ++(-3,{-sqrt(3)}) -- (-2.5,{-sqrt(3)/2}) -- ++(3,{-sqrt(3)});
   \end{tikzpicture}
   ~~
   \begin{tikzpicture}[scale=0.7]
    \clip (-4.3,-4) -- (5,-4) -- (5,5) -- (-4.3,5) -- cycle;
    \draw[dashed,fill=blue, fill opacity=0.2] (0.5,{1.732/2}) -- +(30:{2*1.732}) -- +(0,{2*1.732}) -- +(150:{2*1.732}) -- cycle;
    \draw[dashed] (-2.5,{-sqrt(3)/2}) -- ++(-3,{-sqrt(3)});
    \draw[dashed] (-2.5,{-sqrt(3)/2})  ++(3,{-sqrt(3)}) -- ++(0,{-2*sqrt(3)});
    \draw[dashed] (-2.5,{-sqrt(3)/2})  ++(3,{-sqrt(3)})++(0,{4*sqrt(3)}) -- ++(0,{2*sqrt(3)});
    \draw[dashed] (-2.5,{-sqrt(3)/2})  ++(3,{-sqrt(3)}) ++(3,{sqrt(3)}) -- ++(3,{-sqrt(3)});
    \draw[dashed] (-2.5,{-sqrt(3)/2})  ++(6,{2*sqrt(3)}) -- ++(3,{sqrt(3)});
    \draw[dashed] (-2.5,{-sqrt(3)/2})  ++(0,{2*sqrt(3)}) -- ++(-3,{sqrt(3)});
    \foreach \y in {-3,...,3}{
    \foreach \x in {-3,...,4}{
    \draw ({(\x+\y)*3/2},{(\x-\y)*sqrt(3)/2}) -- ++(1,0);
    \draw ({(\x+\y)*3/2},{(\x-\y)*sqrt(3)/2}) -- ++({-1/2},{sqrt(3)/2});
    \draw ({(\x+\y)*3/2},{(\x-\y)*sqrt(3)/2}) -- ++({-1/2},{-sqrt(3)/2});
    }}
    \foreach \y in {-3,...,3}{
    \foreach \x in {-3,...,3}{
    \draw[red, fill=red] ({2*(\x+\y)*3/2},{2*(\x-\y)*sqrt(3)/2}) circle (0.075);
    \draw[blue, fill=blue] ({2*(\x+\y)*3/2},{2*(\x-\y)*sqrt(3)/2}) + (1,0) circle (0.075);
    \draw[green, fill=green] ({2*(\x+\y)*3/2},{2*(\x-\y)*sqrt(3)/2}) + ({-1/2},{sqrt(3)/2}) circle (0.075);
    \draw[orange, fill=orange] ({2*(\x+\y)*3/2},{2*(\x-\y)*sqrt(3)/2}) + ({-1/2},{-sqrt(3)/2}) circle (0.075);
    \draw[red, fill=red] ({2*(\x+\y)*3/2},{2*(\x-\y)*sqrt(3)/2}) + (-2,0)circle (0.075);
    \draw[blue, fill=blue] ({2*(\x+\y)*3/2},{2*(\x-\y)*sqrt(3)/2}) + (3,0) circle (0.075);
    \draw[green, fill=green] ({2*(\x+\y)*3/2},{2*(\x-\y)*sqrt(3)/2}) + ({3/2},{sqrt(3)/2}) circle (0.075);
    \draw[orange, fill=orange] ({2*(\x+\y)*3/2},{2*(\x-\y)*sqrt(3)/2}) + ({3/2},{-sqrt(3)/2}) circle (0.075);
    }}

    \draw[dashed] (-2.5,{-sqrt(3)/2}) -- ++(3,{-sqrt(3)})
    -- ++(3,{sqrt(3)}) -- ++(0,{2*sqrt(3)}) -- ++(-3,{sqrt(3)}) -- ++(-3,{-sqrt(3)}) -- cycle;
   \end{tikzpicture}
   \caption{Fundamental domain for the action of $G_{K_4}$ (left)
   and $\Lambda_{Q}$ (right)}
   \label{fig:fundDomain}
  \end{center}
 \end{figure}
\end{center}

\begin{center}
 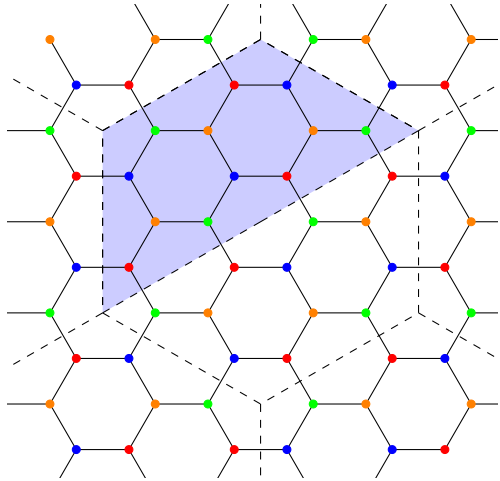
\begin{figure}[H]
  \begin{center}
   \begin{tikzpicture}[scale=0.7]
    \clip (-4.3,-4) -- (5,-4) -- (5,5) -- (-4.3,5) -- cycle;
    \draw[dashed,fill=blue, fill opacity=0.2] (0.5,{1.732/2}) -- ++(30:{2*sqrt(3)}) -- ++(150:{2*sqrt(3)}) -- ++(210:{2*sqrt(3)}) -- ++(270:{2*sqrt(3)})-- cycle;
    \draw[dashed] (-2.5,{-sqrt(3)/2}) -- ++(-3,{-sqrt(3)});
    \draw[dashed] (-2.5,{-sqrt(3)/2})  ++(3,{-sqrt(3)}) -- ++(0,{-2*sqrt(3)});
    \draw[dashed] (-2.5,{-sqrt(3)/2})  ++(3,{-sqrt(3)})++(0,{4*sqrt(3)}) -- ++(0,{2*sqrt(3)});
    \draw[dashed] (-2.5,{-sqrt(3)/2})  ++(3,{-sqrt(3)}) ++(3,{sqrt(3)}) -- ++(3,{-sqrt(3)});
    \draw[dashed] (-2.5,{-sqrt(3)/2})  ++(6,{2*sqrt(3)}) -- ++(3,{sqrt(3)});
    \draw[dashed] (-2.5,{-sqrt(3)/2})  ++(0,{2*sqrt(3)}) -- ++(-3,{sqrt(3)});
    \foreach \y in {-3,...,3}{
    \foreach \x in {-3,...,4}{
    \draw ({(\x+\y)*3/2},{(\x-\y)*sqrt(3)/2}) -- ++(1,0);
    \draw ({(\x+\y)*3/2},{(\x-\y)*sqrt(3)/2}) -- ++({-1/2},{sqrt(3)/2});
    \draw ({(\x+\y)*3/2},{(\x-\y)*sqrt(3)/2}) -- ++({-1/2},{-sqrt(3)/2});
    }}
    \foreach \y in {-3,...,3}{
    \foreach \x in {-3,...,3}{
    \draw[red, fill=red] ({2*(\x+\y)*3/2},{2*(\x-\y)*sqrt(3)/2}) circle (0.075);
    \draw[blue, fill=blue] ({2*(\x+\y)*3/2},{2*(\x-\y)*sqrt(3)/2}) + (1,0) circle (0.075);
    \draw[green, fill=green] ({2*(\x+\y)*3/2},{2*(\x-\y)*sqrt(3)/2}) + ({-1/2},{sqrt(3)/2}) circle (0.075);
    \draw[orange, fill=orange] ({2*(\x+\y)*3/2},{2*(\x-\y)*sqrt(3)/2}) + ({-1/2},{-sqrt(3)/2}) circle (0.075);
    \draw[red, fill=red] ({2*(\x+\y)*3/2},{2*(\x-\y)*sqrt(3)/2}) + (-2,0)circle (0.075);
    \draw[blue, fill=blue] ({2*(\x+\y)*3/2},{2*(\x-\y)*sqrt(3)/2}) + (3,0) circle (0.075);
    \draw[green, fill=green] ({2*(\x+\y)*3/2},{2*(\x-\y)*sqrt(3)/2}) + ({3/2},{sqrt(3)/2}) circle (0.075);
    \draw[orange, fill=orange] ({2*(\x+\y)*3/2},{2*(\x-\y)*sqrt(3)/2}) + ({3/2},{-sqrt(3)/2}) circle (0.075);
    }}

    \draw[dashed] (-2.5,{-sqrt(3)/2}) -- ++(3,{-sqrt(3)})
    -- ++(3,{sqrt(3)}) -- ++(0,{2*sqrt(3)}) -- ++(-3,{sqrt(3)}) -- ++(-3,{-sqrt(3)}) -- cycle;
   \end{tikzpicture}
   \caption{Fundamental domain for the action of $G_T$}
   \label{fig:fundDomainT}
  \end{center}
 \end{figure}
\end{center}

\end{proof}

To our knowledge, 
the following property of the honeycomb lattice 
has not been explicitly stated in the literature, 
and it is compared with the results in \cite{kotani2001standard,sunada2012topological}.
\begin{cor}
 The honeycomb lattice graph ${\mathcal L}$ is a ${\mathbb Z}^2$-cover of
 the star graph $X_3$ and the cube $Q$, and is also
 a ${\mathbb Z}^2\rtimes C_2$-cover of the tetrahedron $K_4$ and the truncated tetrahedron $T$.
\end{cor}

\begin{cor}
 The spectrum ${\mathcal S}$ of $X_3$ is given by
\[
 {\mathcal S}=\left\{\pm\left|1+\exp\left({\frac{2\pi{k}{i}}{6}}\right) +
 \exp\left({\frac{2\pi{(-k+3l)}{i}}{6}}\right) \right|~~\middle|\,
 k\in {\mathbb Z}/6{\mathbb Z},~ l\in {\mathbb Z}/2{\mathbb Z}\right\}.
\]
\end{cor}
\begin{proof}
 Let the hexagonal lattice graph 
 ${\mathcal L} = (V({\mathcal L}), E({\mathcal L}))$,
 $B, W, {\mathbf v}_1$ and ${\mathbf v}_2$
 be defined as in $(\ref{eq:BW})$ and $(\ref{eq:defbase})$.
 We express each vertex $v$ of ${\mathcal L}$ as
 a member in  ${\mathbb Z}^2\times\{0,1\}$ by using the bijections
 \[
 B \ni (1,0) + x{\mathbf v}_1 + y{\mathbf v}_2 \mapsto (x,y,0) \in {\mathbb Z}^2\times\{0\},
 \]
 and
 \[
 W \ni (0,0) + x{\mathbf v}_1 + y{\mathbf v}_2 \mapsto (x,y,1) \in {\mathbb Z}^2\times\{1\}.
 \]
 
 We define  the functions $f_{k,l}:V({\mathcal L}) \to {\mathbb C}$ for
 $k\in {\mathbb Z}/6{\mathbb Z}, l\in {\mathbb Z}/2{\mathbb Z}$ by
 \[
  f_{k,l}(x,y,t) = \begin{cases}
		    \alpha (-1)^{ly}\zeta_6^{k(x-y)} & t = 0,\\
		    \beta (-1)^{ly}\zeta_6^{k(x-y)} & t = 1,
		   \end{cases}
 \]
 where $\alpha, \beta \in {\mathbb C}$ and $\zeta_6 = \exp\left(\frac{2\pi i}{6}\right)$.
 As shown in Proposition $\ref{prop:geom}$, $X_3$ is isomorphic to the quotient graph
 ${\mathcal L}/\Lambda_{X_3}$ where $\Lambda_{X_3} = {\mathbb Z}(2{\mathbf v}_1 + 2{\mathbf v}_2) + {\mathbb Z}(4{\mathbf v}_1-2{\mathbf v}_2)$.
 Therefore $f_{k,l}$ can be considered as a function defined on the
 quotient graph, since
 $f_{k,l}(v+2{\mathbf v}_1 + 2{\mathbf v}_2)=f_{k,l}(v+4{\mathbf v}_1-2{\mathbf v}_2)=f_{k,l}(v)$
 for all $v\in V({\mathcal L})$.
 For $f_{k,l}$ to be an eigenfunction of the adjacency operator
 of $X_3$, the vector $(\alpha,\beta)$ must satisfy 
 \[
 \begin{pmatrix}
  0 & 1 + \zeta_6^k + (-1)^l\zeta_6^{-k} \\
  1 + \zeta_6^{-k} + (-1)^l\zeta_6^{k} & 0
 \end{pmatrix}
 \begin{pmatrix}
  \alpha \\ \beta
 \end{pmatrix}
 =
 \lambda
 \begin{pmatrix}
  \alpha \\ \beta
 \end{pmatrix},
 \]
 from which we obtain the
 eigenvalues $\lambda = \pm \left|1 + \zeta_6^k + (-1)^l\zeta_6^{-k}\right|$.
\end{proof}

\section{Concluding comments}
In the case of the ${\mathfrak S}_3$-cover, 
the relation among Ihara zeta functions of intermediate covers already 
determines non-trivial spectral relations. 
For larger groups such as ${\mathfrak S}_4$ or ${\mathfrak S}_5$,
there exist many intermediate covers, 
and it is natural to ask whether the spectra of 
them can be reconstructed systematically 
from the zeta relations and Theorem $\ref{thm:chapuyferay}$. 
We leave this direction for future research.


\end{document}